\theoremstyle{plain}
\newtheorem{thm}{Theorem}
\newtheorem{cor}[thm]{Corollary}
\newtheorem{lem}[thm]{Lemma}
\newtheorem{proposition-principale}[thm]{Proposition principale}
\theoremstyle{definition}
\numberwithin{equation}{section}       
\begin{document}

\setlength{\baselineskip}{0.54cm}         
\title[{Singularities of holomorphic codimension one foliations 
of the complex projective plane}]{Singularities of holomorphic codimension one foliations \\
of the complex projective plane}
\date{}

\author{Dominique Cerveau}
\address{Univ. Rennes, CNRS, IRMAR-UMR $6625$, F-$35000$ Rennes, France}
\email{dominique.cerveau@univ-rennes1.fr}

\author{Julie D\'eserti}
\address{Universit\'e d'Orl\'eans, Institut Denis Poisson, route de Chartres, $45067$ Orl\'eans Cedex $2$, France}
\email{deserti@math.cnrs.fr}

\subjclass[2020]{32S65}

\keywords{Singularities of holomorphic foliations}

\begin{abstract}
We prove that any holomorphic codimension $1$ foliation on the 
complex projective plane has at most one
singularity up to the action of an ad-hoc birational map. 
Consequently, any algebraic foliation on the affine plane has no
singularities up to the action of a suitable birational self map
of the complex projective plane into itself.
\end{abstract}

\maketitle

\vspace{2cm}

Let $\mathcal{F}$ be a codimension one holomorphic foliation
of degree $N$ on $\mathbb{P}^2_\mathbb{C}$. Denote by 
$\pi\colon\mathbb{C}^3\smallsetminus\{0\}\to\mathbb{P}^2_\mathbb{C}$
the canonical projection. The homogeneous foliation 
$\pi^{-1}\mathcal{F}$ extends to $\mathbb{C}^3$ and is defined
by a $1$-form 
\[
\omega=a(x,y,z)\,\mathrm{d}x+b(x,y,z)\,\mathrm{d}y+c(x,y,z)\,\mathrm{d}z
\]
where $a$, $b$, $c$ are homogeneous polynomials of degree
$N+1$ without common component satisfying the Euler
identity: $ax+by+cz=0$; it is the Chow theorem for 
foliations. The singular set $\mathrm{Sing}(\mathcal{F})$
of $\mathcal{F}$ is given by
\[
\pi(\{a=b=c=0\}\smallsetminus\{0\}).
\]
Let us recall what is the Milnor number $\mu(\mathcal{F},m)$
of a foliation $\mathcal{F}$ at a singular point $m$. Let 
us fix a local chart $(u,v)$ such that $m=(0,0)$. The 
germ of $\mathcal{F}$ at $m$ is defined, up to multiplication
by a unit at $0$, by a $1$-form $E\,\mathrm{d}u+F\,\mathrm{d}v$. 
Denote by $\langle E,\,F\rangle$ the ideal generated by 
$E$ and $F$, then 
\[
\mu(\mathcal{F},m)=\dim\frac{\mathbb{C}\{u,v\}}{\langle E,\,F\rangle};
\]
it is also the multiplicity of intersection of the germs of curves
$(E=0)$ and $(F=0)$. Let us recall that (\cite[Chapter 2, Section 3, Example 2]{Brunella})
\[
\displaystyle\sum_{m\in\mathrm{Sing}(\mathcal{F})}\mu(\mathcal{F},m)=N^2+N+1;
\]
in particular, $\mathcal{F}$ has a least one singular point.

\medskip

\noindent\textbf{Example.}
Let us consider the diagonal linear foliation $\mathcal{F}$ given by the $1$-form 
\[
\omega=\lambda yz\,\mathrm{d}x+xz\,\mathrm{d}y-(1+\lambda)xy\,\mathrm{d}z
\]
with $\lambda\in\mathbb{C}\smallsetminus\{0,\,-1,\,-2\}$. Note that $\mathrm{Sing}(\mathcal{F})=\big\{(0:0:1),\,(1:0:0),\,(0:1:0)\big\}$;
moreover, $x^\lambda yz^{-(1+\lambda)}$ is a first integral of $\mathcal{F}$. 

The birational involution from $\mathbb{P}^2_\mathbb{C}$ into itself given 
by 
\[
\mathcal{I}_1\colon(x:y:z)\dashrightarrow(xy:y^2:x^2-yz),
\]
defines an isomorphism of  $\{y\not=0\}$.
Remark that the foliation $\mathcal{I}_1^{-1}\mathcal{F}$ is described by
the $1$-form
\[
-y\big((2+\lambda)x^2+\lambda yz\big)\,\mathrm{d}x+x\big((2+\lambda)x^2-yz\big)\,\mathrm{d}y+(1+\lambda)xy^2\,\mathrm{d}z;
\]
in particular, $\mathrm{Sing}(\mathcal{I}_1^{-1}\mathcal{F})=\big\{(0:0:1),\,(0:1:0)\big\}$. 

Let us now consider the birational involution $\mathcal{I}_2$ from
$\mathbb{P}^2_\mathbb{C}$ into itself given by 
\[
\mathcal{I}_2\colon(x:y:z)\dashrightarrow(x^2:-xy+z^2:xz);
\]
note that $\mathcal{I}_2$ defines an isomorphism of $\{x\not=0\}$.
Furthermore, the restriction of $\mathcal{I}_2^{-1}\mathcal{I}_1^{-1}\mathcal{F}$ on 
$(x=0)$ is described by the $1$-form $(1 - \lambda )z^5\,\mathrm{d}x$; consequently,
$\mathrm{Sing}(\mathcal{I}_2^{-1}\mathcal{I}_1^{-1}\mathcal{F})=\big\{(0:1:0)\big\}$.
\medskip

Can we generalize this to 
all holomorphic foliations on $\mathbb{P}^2_\mathbb{C}$ ? The answer is yes:

\begin{thm}\label{thm:main}
{\sl Let $\mathcal{F}$ be a holomorphic foliation on $\mathbb{P}^2_\mathbb{C}$. 
There exists a birational self map $\phi$ from $\mathbb{P}^2_\mathbb{C}$ 
into itself 
such that $\phi^{-1}\mathcal{F}$ has at most one singular point.}
\end{thm}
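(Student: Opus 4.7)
The plan is to induct on $|\mathrm{Sing}(\mathcal{F})|$. The content is a reduction step: given a foliation with at least two singular points, produce a birational self-map $\psi$ of $\mathbb{P}^2_{\mathbb{C}}$ such that $|\mathrm{Sing}(\psi^{-1}\mathcal{F})| < |\mathrm{Sing}(\mathcal{F})|$. Composing finitely many such $\psi$'s then yields the map $\phi$ required by the theorem.

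To carry out the reduction step I would mimic the worked example. Pick two distinct singularities $p,q\in\mathrm{Sing}(\mathcal{F})$. After a linear projective change of coordinates, place $p = (1:0:0)$, $q = (0:0:1)$, so that the line through them is $L = \{y=0\}$. The natural candidates for $\psi$ are the birational involutions
\[
\mathcal{J}_{f,d}\colon (x:y:z)\dashrightarrow \bigl(xy^{d-1} : y^d : f(x,y) - y^{d-1}z\bigr),
\]
where $f(x,y)$ is homogeneous of degree $d\ge 2$ with a nonzero $x^d$ coefficient. A direct computation shows that $\mathcal{J}_{f,d}$ is an involution for every such $f$, restricts to the affine involution $(x,z)\mapsto(x, f(x,1)-z)$ on $\{y\neq 0\}$, and contracts $L$ to the point $(0:0:1)$. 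Hence singularities of $\mathcal{F}$ lying outside $L$ are carried bijectively and unchanged to singularities of $\mathcal{J}_{f,d}^{-1}\mathcal{F}$, and any change in the singular set must take place on $L$; the case $d=2$, $f=x^2$ recovers the involution $\mathcal{I}_1$ of the example.

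The next step is to compute $\mathcal{J}_{f,d}^{*}\omega$, strip off the common power of $y$ produced by the contraction of $L$, and examine the restriction of the cleaned $1$-form to $L$. The aim is to choose $f$ (and, if necessary, $d$) so that this restriction does not vanish at the image of $p$, which removes $p$ from $\mathrm{Sing}(\mathcal{J}_{f,d}^{-1}\mathcal{F})$. The $(d+1)$-dimensional family of polynomials $f$ and the freedom to raise $d$ should supply enough parameters to meet this condition; if the axis $y=0$ proves to be resistant for some configuration, one can swap roles and work in another chart, just as the example passes from $\mathcal{I}_1$ to $\mathcal{I}_2$.

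The main obstacle will be this last verification: showing that, for every $\mathcal{F}$ and every pair of its singularities, some $\mathcal{J}_{f,d}$ actually eliminates one of them. The precise power of $y$ factored out, and the restriction to $L$ of the cleaned $1$-form, depend subtly on the jet of $\omega$ along $L$ at $p$, and for highly degenerate or dicritical germs this dependence could conspire to keep $p$ singular for every $f$. Note that the sum formula $\sum_m \mu(\mathcal{F},m) = N^2+N+1$ is not conserved by these birational pullbacks (the degree $N$ may change), so the induction has to be run on the cardinality of $\mathrm{Sing}(\mathcal{F})$ rather than on the total Milnor number; keeping the local geometry at $p$ under control with enough generality is where the real work lies.
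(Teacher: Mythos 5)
Your overall strategy (iterate a reduction step given by a quadratic-type involution that contracts a line through singular points, isomorphism off that line) is the same as the paper's, but the proposal stops exactly at the point where the paper's proof actually lives, and the specific choice you make there is the wrong one. You contract the line $L$ through $p$ and $q$ onto the \emph{singular} point $q=(0:0:1)$, and then hope to tune $f$ and $d$ so that the cleaned pullback restricted to $L$ is nonvanishing at $p$. As you yourself note, the restriction of the cleaned form to $L$ is governed by the germ of $\omega$ at the point onto which $L$ is contracted; since that germ is singular and completely uncontrolled (possibly dicritical, with $L$ in its tangent cone, etc.), there is no argument that some $\mathcal{J}_{f,d}$ works, and moreover nothing prevents the restricted form from acquiring \emph{new} zeros along $L$, so even ``remove $p$'' would not by itself give the strict decrease of $\#\mathrm{Sing}$ your induction needs. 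This is a genuine gap, not a routine verification.

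The paper's key idea, which your plan is missing, is to center the contraction at a point of $L$ that is \emph{not} singular for $\mathcal{F}$: choose coordinates with $L=(z=0)$, $(0:1:0)\notin\mathrm{Sing}(\mathcal{F})$, and (when $L$ is not invariant) $L$ transverse at $(0:1:0)$ to the leaf through it, i.e.\ $A(0,1,0)\neq 0$; then use the fixed involution $\phi\colon(x:y:z)\dashrightarrow(xz:-yz+x^2:z^2)$, which contracts $L$ to $(0:1:0)$. With this choice the feared computation is trivial: $\phi^*\omega\vert_{z=0}=xA(0,x^2,0)\,\mathrm{d}x$ (and $2C(0,x^2,0)\,\mathrm{d}z$ in the invariant case, where $C(0,1,0)\neq 0$ because $(0:1:0)$ is nonsingular), which vanishes on $L$ only at $x=0$. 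Hence \emph{all} $n\geq 2$ singular points on $L$ are collapsed and at most one singular point survives on $L$, while the $N-n$ singularities off $L$ are untouched, giving the strict decrease $N\mapsto N-n+1$ uniformly, with no genericity-of-$f$ argument and no case analysis on the local type of the singular germs. Note also that your plan never addresses the dichotomy the paper must (and does) treat separately: whether $L$ is invariant by $\mathcal{F}$ or not, which changes both the normal form of $\omega$ along $L$ and the transversality condition one imposes at the chosen nonsingular point. To repair your proof, replace the center $q$ of $\mathcal{J}_{f,d}$ by a nonsingular point of $\mathcal{F}$ on $L$ satisfying the paper's condition $(\star)$; then the case $d=2$, $f=x^2$ of your family already suffices.
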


Note that many properties are preserved by conjugacy
by birational maps, for instance the existence of 
invariant algebraic curves of genus $\geq 2$, but
also the existence of dense leaves.

\begin{cor}\label{cor:main}
{\sl Let $\mathcal{F}$ be an algebraic foliation on $\mathbb{C}^2$.
There exists a birational map $\phi$ from $\mathbb{C}^2$ into itself
such that $\phi^{-1}\mathcal{F}$ has no singular point.}
\end{cor}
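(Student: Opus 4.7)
The plan is to reduce the corollary to Theorem~\ref{thm:main} by extending the given algebraic foliation on $\mathbb{C}^2$ to a holomorphic foliation on $\mathbb{P}^2_\mathbb{C}$ and then using a projective automorphism of $\mathbb{P}^2_\mathbb{C}$ to push the possible remaining singularity onto the line at infinity.

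First, given an algebraic foliation on $\mathbb{C}^2$ defined by a polynomial $1$-form $P(u,v)\,\mathrm{d}u+Q(u,v)\,\mathrm{d}v$, I would use the standard homogenization procedure recalled in the introduction to produce a homogeneous $1$-form of appropriate degree on $\mathbb{C}^3$, without common component and satisfying the Euler identity. By the Chow theorem for foliations, this defines a holomorphic foliation $\mathcal{F}$ of some degree $N$ on $\mathbb{P}^2_\mathbb{C}$ whose restriction to the affine chart $\mathbb{C}^2 = \mathbb{P}^2_\mathbb{C}\smallsetminus L_\infty$ coincides with the original foliation.

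Next, I would apply Theorem~\ref{thm:main} to $\mathcal{F}$ to obtain a birational self-map $\phi_0$ of $\mathbb{P}^2_\mathbb{C}$ such that $\phi_0^{-1}\mathcal{F}$ has at most one singular point on $\mathbb{P}^2_\mathbb{C}$. If this singular set is either empty or already contained in $L_\infty$, the map $\phi_0$ itself, restricted to the affine plane, provides the desired birational self-map of $\mathbb{C}^2$. Otherwise, letting $m$ denote the unique singular point, I would pick a projective automorphism $A\in\mathrm{PGL}_3(\mathbb{C})$ satisfying $A^{-1}(m)\in L_\infty$; such an $A$ always exists since $\mathrm{PGL}_3(\mathbb{C})$ acts transitively on $\mathbb{P}^2_\mathbb{C}$. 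Then $\phi:=\phi_0\circ A$ is a birational self-map of $\mathbb{P}^2_\mathbb{C}$, and $\phi^{-1}\mathcal{F}=A^{-1}\phi_0^{-1}\mathcal{F}$ has its unique singular point at $A^{-1}(m)\in L_\infty$. Restricting $\phi$ to $\mathbb{C}^2$, which is legitimate because $\mathbb{C}^2$ is Zariski-open and dense in $\mathbb{P}^2_\mathbb{C}$, yields a birational self-map of the affine plane along which $\mathcal{F}$ pulls back to a foliation with empty singular set on $\mathbb{C}^2$.

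The argument is essentially a direct corollary, so no deep obstacle is expected. The main points to verify carefully are the compatibility between the algebraic viewpoint on $\mathbb{C}^2$ and the holomorphic/projective viewpoint on $\mathbb{P}^2_\mathbb{C}$: namely that the homogenization of a polynomial $1$-form yields a genuine holomorphic foliation in the sense above (in particular, the resulting $1$-form must be reduced to have no common component), and that any birational self-map of $\mathbb{P}^2_\mathbb{C}$ restricts to a birational self-map of $\mathbb{C}^2$. Both assertions are standard bookkeeping.
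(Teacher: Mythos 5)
Your proposal is correct and is essentially the deduction the paper intends: the corollary is stated as an immediate consequence of Theorem~\ref{thm:main}, obtained by extending the affine foliation to $\mathbb{P}^2_\mathbb{C}$, reducing to at most one singular point, and placing that point on the line at infinity so that the restriction to $\mathbb{C}^2$ is nonsingular. Your explicit use of a projective automorphism to move the remaining singularity onto $L_\infty$, and the remark that birational self-maps of $\mathbb{P}^2_\mathbb{C}$ and of $\mathbb{C}^2$ coincide as rational maps, correctly supply the routine details the paper leaves implicit.
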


In a certain sense Corollary \ref{cor:main} indicates 
that algebraic foliations of $\mathbb{C}^2$ have no special 
properties.

Note that the situation is completely different on $\mathbb{P}^1_\mathbb{C}\times\mathbb{P}^1_\mathbb{C}$; in that space there exist foliations without singularities: {\sl a 
foliation on $\mathbb{P}^1_\mathbb{C}\times\mathbb{P}^1_\mathbb{C}$
birationally conjugate to a foliation without singularities
has a rational first integral} (\cite{Brunella}).

\bigskip

To prove Theorem \ref{thm:main} we will use the following 
statement:

\begin{lem}\label{lem:tec}
Let $\mathcal{F}$ be a holomorphic codimension one foliation
on $\mathbb{P}^2_\mathbb{C}$. Assume that $\mathcal{F}$
has $N$ singular points, and among them $n\geq 2$ singular points on 
a line $L$. There exists a birational self 
map $\phi$ from $\mathbb{P}^2_\mathbb{C}$ 
into itself such that $\phi^{-1}\mathcal{F}$ has 
$N-n+1$ singular points, one on $L$ and $N-n$ on 
$\mathbb{P}^2_\mathbb{C}\smallsetminus L$.
\end{lem}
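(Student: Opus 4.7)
The plan is to exhibit $\phi$ as a polynomial automorphism of $\mathbb{C}^2 = \mathbb{P}^2_{\mathbb{C}} \smallsetminus L$ whose extension to $\mathbb{P}^2_{\mathbb{C}}$ contracts $L$ to a single point. By a linear change of homogeneous coordinates, first assume $L = \{z = 0\}$ and that $q_0 := (0:1:0) \in L$ is not a singular point of $\mathcal{F}$; this is possible because $\mathrm{Sing}(\mathcal{F}) \cap L$ is finite. In the affine coordinates $(u, v) := (x/z, y/z)$ on $\mathbb{C}^2$, set $\phi(u, v) := (u, v + u^d)$ for some integer $d \geq 2$. This is a polynomial automorphism of $\mathbb{C}^2$, extending to the birational self-map
\[
\phi(x : y : z) = (x z^{d-1} : y z^{d-1} + x^d : z^d)
\]
of $\mathbb{P}^2_{\mathbb{C}}$, whose sole indeterminacy point is $q_0$ and which contracts $L$ to $q_0$. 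Since $\phi$ restricts to an automorphism of $\mathbb{P}^2_{\mathbb{C}} \smallsetminus L$, it induces a bijection between the $N - n$ singular points of $\mathcal{F}$ off $L$ and those of $\phi^{-1}\mathcal{F}$ off $L$.

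It remains to verify that $\phi^{-1}\mathcal{F}$ has exactly one singular point on $L$. Write $\mathcal{F}$ affinely as $A(u,v)\,du + B(u,v)\,dv$; the pullback is $A'(u,v)\,du + B'(u,v)\,dv$ with $A' = (A + d u^{d-1} B)(u, v+u^d)$ and $B' = B(u, v+u^d)$, and by Euler applied to $\phi^{-1}\mathcal{F}$ the $dz$-coefficient is $C' := -uA' - vB'$. In the chart $\{y \neq 0\}$ with coordinates $(r,w) := (x/y, z/y)$ (so $L = \{w=0\}$), substituting $(u,v) = (r/w, 1/w)$ and simplifying via the Euler relation presents the 1-form as $(A'/w)\,dr + (C'/w)\,dw$. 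After multiplying through by $w^{\deg C' + 1}$ to clear denominators, the coefficient of $dr$ is divisible by $w$ (so it vanishes on $L$ and $L$ becomes invariant by $\phi^{-1}\mathcal{F}$), while the coefficient of $dw$ restricted to $L$ equals a nonzero scalar multiple of $r^{\deg A' + 1}$. Hence the unique singular point of $\phi^{-1}\mathcal{F}$ on $L$ visible in this chart is $q_0 = (0:1:0)$; a symmetric computation in the chart $\{x \neq 0\}$ produces no further singular point on $L$.

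The crucial and most delicate step is checking that the top-degree homogeneous part $A'_{\deg A'}$ is a nonzero pure monomial $c \cdot u^{\deg A'}$. Expanding $A(u, v+u^d) + d u^{d-1} B(u, v+u^d)$ and collecting monomials of maximum total degree in $(u,v)$, the Euler identity $uA + vB + C = 0$ for $\mathcal{F}$ constrains the top-degree parts of $A$ and $B$ to take the coupled form $A_{\mathrm{top}} = vP$, $B_{\mathrm{top}} = -uP$ (when $L$ is not $\mathcal{F}$-invariant; an analogous Euler-derived relation applies when $L$ is $\mathcal{F}$-invariant). A single monomial then survives at the top total degree, with coefficient proportional to $P(0,1)$---nonzero precisely because of the normalization $q_0 \notin \mathrm{Sing}(\mathcal{F})$. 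Once this monomial is isolated, the rest of the on-$L$ analysis proceeds by direct manipulation of the Euler identity for $\phi^{-1}\mathcal{F}$, and an entirely parallel verification in the chart $\{x \neq 0\}$ rules out other singular points on $L$.
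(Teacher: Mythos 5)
Your construction is essentially the paper's: for $d=2$ your map $(x:y:z)\dashrightarrow(xz:yz+x^2:z^2)$ is, up to a sign, exactly the quadratic involution used in the paper, and the skeleton of the argument (an isomorphism of $\mathbb{P}^2_\mathbb{C}\smallsetminus L$ that contracts $L$ to a point $q_0\in L$, followed by restricting the pulled-back form to $L$ and locating its zeros) is the same. The gap is precisely at the step you yourself call crucial. Write $\omega=a\,\mathrm{d}x+b\,\mathrm{d}y+c\,\mathrm{d}z$ with homogeneous coefficients satisfying $xa+yb+zc=0$. When $L$ is not $\mathcal{F}$-invariant your Euler analysis is correct, and the top coefficient of $A'$ is indeed $(1-d)P(0,1)$; but $P(0,1)=a(0,1,0)$, whereas $q_0=(0:1:0)\notin\mathrm{Sing}(\mathcal{F})$ only gives $(a(0,1,0),c(0,1,0))\neq(0,0)$, since Euler already forces $b(0,1,0)=0$. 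So your normalization does \emph{not} imply $P(0,1)\neq 0$: the value $a(0,1,0)$ vanishes exactly when $L$ is tangent at $q_0$ to the leaf of $\mathcal{F}$ through $q_0$, which can happen at a non-singular point of a non-invariant line. In that case your leading monomial is zero, $A'$ need not have a pure power of $u$ on top, the restriction of the pulled-back form to $L$ can even vanish identically, and the computation no longer isolates a single singular point on $L$; the claim that the nonvanishing follows ``precisely'' from $q_0\notin\mathrm{Sing}(\mathcal{F})$ is false.

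The repair is cheap and is exactly the extra normalization $(\star)$ in the paper: when $L$ is not invariant, the tangency points of $\mathcal{F}$ with $L$ are finite in number, so you may choose $q_0$ on $L$ that is neither singular nor a tangency point; then $P(0,1)=a(0,1,0)\neq 0$ and your argument closes. Note also that the invariant case, which you dispatch with ``an analogous Euler-derived relation applies,'' is the one where your weaker hypothesis actually suffices: there $a(x,y,0)=b(x,y,0)=0$, the relevant nonvanishing quantity is $c(0,1,0)$, and this is nonzero exactly because $q_0\notin\mathrm{Sing}(\mathcal{F})$. That case still needs to be written out separately, since the top-degree structure of $A$, $B$, $C$ is different from the transverse case, as in the paper's second bullet.
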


\begin{proof}
\begin{itemize}
\item[$\diamond$] Assume first that $L$ is not invariant
by $\mathcal{F}$. Let us choose coordinates such that 
\[
\left\{
\begin{array}{lll}
L=(z=0)\\
(0:1:0)\not\in\mathrm{Sing}(\mathcal{F})\\
\text{$L$ is not tangent to the leaf of $\mathcal{F}$ through $(0:1:0)$ at $(0:1:0)$ \quad $(\star)$} 
\end{array}
\right.
\]
The foliation $\mathcal{F}$ is defined by 
\[
\omega=A(x,y,z)\,\mathrm{d}x+B(x,y,z)\,\mathrm{d}y+C(x,y,z)\,\mathrm{d}z.
\]
Let us recall the Euler condition: $xA+yB+zC=0$; in particular
note that $B(0,1,0)=0$. The tangency condition $(\star)$
says that $A(0,1,0)\not=0$.

Let us consider the birational involution $\phi$ of 
$\mathbb{P}^2_\mathbb{C}$ into itself given by 
\[
\phi\colon(x:y:z)\dashrightarrow(xz:-yz+x^2:z^2);
\]
note that 
\begin{align*}
    & \mathrm{Ind}(\phi)=\{(0:1:0)\}, 
    && \mathrm{Exc}(\phi)=\{(z=0)\}
\end{align*}
where $\mathrm{Ind}(\phi)$ (resp. $\mathrm{Exc}(\phi)$) denotes 
the set of indeterminacy (resp. the exceptional set) of $\phi$.
In particular, $\phi$ defines an isomorphism of  $\{z\not=0\}$.
One has 
\begin{eqnarray*}
\phi^*\omega&=&A(xz,-yz+x^2,z^2)(x\,\mathrm{d}z+z\,\mathrm{d}x)\\
& & \hspace{1cm}+B(xz,-yz+x^2,z^2)(-y\,\mathrm{d}z-z\,\mathrm{d}y+2x\,\mathrm{d}x)+2C(xz,-yz+x^2,z^2)z\,\mathrm{d}z,
\end{eqnarray*}
and 
\[
\phi^*\omega\big\vert_{z=0}=xA(0,x^2,0)\,\mathrm{d}x+\underbrace{B(0,x^2,0)}_{=0}\big(-y\,\mathrm{d}z+2x\,\mathrm{d}x\big)=xA(0,x^2,0)\,\mathrm{d}x;
\]
in particular $\phi^*\omega\big\vert_{z=0}$ vanishes 
only at $x=0$ since $A(0,1,0)\not=0$. As a consequence, 
$\#\big(\mathrm{Sing}(\phi^{-1}\mathcal{F})\cap(z=0)\big)=1$
and 
$\#\,\mathrm{Sing}(\phi^{-1}\mathcal{F}\big)=N-n+1$.

\item[$\diamond$] Suppose now that $L$ is invariant by 
$\mathcal{F}$. Let us choose coordinates such that
$\left\{
\begin{array}{lll}
L=(z=0)\\
(0:1:0)\not\in\mathrm{Sing}(\mathcal{F})
\end{array}
\right.$
The foliation $\mathcal{F}$ is defined by 
\[
\omega=zA(x,y,z)\,\mathrm{d}x+zB(x,y,z)\,\mathrm{d}y+C(x,y,z)\,\mathrm{d}z.
\]
Let us still consider the birational involution $\phi$ of 
$\mathbb{P}^2_\mathbb{C}$ into itself given by 
\[
\phi\colon(x:y:z)\dashrightarrow(xz:-yz+x^2:z^2)
\]
that defines an isomorphism of $\{z\not=0\}$.
Then
\begin{eqnarray*}
\phi^*\omega&=&z\Big(zA(xz,-yz+x^2,z^2)(x\,\mathrm{d}z+z\,\mathrm{d}x)\\
& &\hspace{1cm}+zB(xz,-yz+x^2,z^2)(-y\,\mathrm{d}z-z\,\mathrm{d}y+2x\,\mathrm{d}x)+2C(xz,-yz+x^2,z^2)\,\mathrm{d}z\Big),
\end{eqnarray*}
and the restriction of $\phi^{-1}\mathcal{F}$ on $(z=0)$ is described by
the $1$-form $2C(0,x^2,0)\,\mathrm{d}z$.
But $C(0,1,0)\not=0$ (because $(0:1:0)\not\in\mathrm{Sing}(\mathcal{F})$) 
so $C(0,x^2,0)=0$ if and only if $x=0$. In particular, 
$\#\big(\mathrm{Sing}(\phi^{-1}\mathcal{F})\cap(z=0)\big)=1$
and 
$\#\,\mathrm{Sing}(\phi^{-1}\mathcal{F}\big)=N-n+1$.
\end{itemize}
\end{proof}
 
\begin{proof}[Proof of Theorem \ref{thm:main}]
We get the result by iteration using Lemma \ref{lem:tec}.
\end{proof}

Assume that $\mathcal{F}$ is an algebraic foliation
of any codimension of $\mathbb{C}^n$; is it true 
that there exists a birational self map of 
$\mathbb{P}^n_\mathbb{C}$ into itself such that
$\phi^{-1}\mathcal{F}$ has no singularity ?
In the real case (for example in 
$\mathbb{P}^2_\mathbb{R}$) what is the best result
we can expect ?

Remark that following \cite{CDGBM} in which 
we classify, up to automorphisms of 
$\mathbb{P}^2_\mathbb{C}$, the quadratic foliations having 
a unique singularity there is a lot of activity around the 
foliations of $\mathbb{P}^2_\mathbb{C}$ having a unique 
singularity (\emph{for instance} \cite{FernandezPuchuriRosas, Alcantara, Coutinho, CoutinhoJales, CoutinhoFerreira2, CoutinhoFerreira}).

\vspace*{2cm}

\bibliographystyle{alpha}
\bibliography{biblio}

\nocite{}

\end{document}